\documentclass[12pt]{article}
\usepackage{amsmath,amssymb,amsthm}
\usepackage{enumerate, cite}
\usepackage{graphicx}
\usepackage[dvips]{color}

\textwidth15cm
\textheight20.5cm
\oddsidemargin 0.4cm
\evensidemargin 0.4cm
\voffset-1cm

\newtheorem{theorem}{Theorem}[section]
\newtheorem{proposition}[theorem]{Proposition}
 \newtheorem{example}{Example}[section]
\newtheorem{claim}{Claim}
\newtheorem{case}{Case}

\newtheorem{problem}[example]{Problem}

\newcommand{\gp}{{\rm gp}}
\newcommand{\Z}{{\rm Z}}
\newcommand{\PP}{{\rm P}}

\begin{document}

\title{Zero forcing number versus general position number in tree-like graphs}

\author{
Hongbo Hua $^{a}$\,\thanks{corresponding author}
\and
Xinying Hua $^{b}$
\and
Sandi Klav\v zar $^{c,d,e}$
}

\date{\today}

\maketitle

\begin{center}
	$^a$ Faculty of Mathematics and Physics, Huaiyin Institute of Technology \\
             Huai'an, Jiangsu 223003, PR China \\
             \texttt{hongbo\_hua@163.com} \\
	\medskip

	$^b$ College of Science, Nanjing University of Aeronautics \& Astronautics \\
	Nanjing, Jiangsu 210016, PR China \\
	\texttt{xyhuamath@163.com} \\
	\medskip

	$^c$ Faculty of Mathematics and Physics, University of Ljubljana, Slovenia\\
	\texttt{sandi.klavzar@fmf.uni-lj.si}\\
	\medskip
	
	$^d$ Faculty of Natural Sciences and Mathematics, University of Maribor, Slovenia\\
	\medskip
	
	$^e$ Institute of Mathematics, Physics and Mechanics, Ljubljana, Slovenia\\
	\medskip
\end{center}

\begin{abstract}
Let $\Z(G)$ and $\gp(G)$ be the zero forcing number and the general position number of a graph $G$, respectively. Known results imply that $\gp(T)\ge \Z(T) + 1$ holds for every nontrivial tree $T$. It is proved that the result extends to block graphs. For connected, unicyclic graphs $G$ it is proved that $\gp(G) \ge \Z(G)$. The result extends neither to bicyclic graphs nor to quasi-trees. Nevertheless, a large class of quasi-trees is found for which $\gp(G) \ge \Z(G)$ holds.
\end{abstract}

\noindent
{\bf Keywords:} zero forcing number; general position number; tree; unicyclic graph; quasi-tree \\

\noindent
AMS Subj.\ Class.\ (2020): 05C69, 05C12

\section{Introduction}

In linear algebra, the zero forcing number of a graph was introduced in~\cite{SGWGroup} to bound the minimum rank of matrices associated with graphs. In physics, the zero forcing was introduced to study controllability of quantum systems~\cite{Burgarth}; in computer science, it appears as the fast-mixed search model for some pursuit-evasion games~\cite{Yang}; in network science, it models the spread of a disease over a population~\cite{Dreyer}.

Since its introduction by the  ``AIM group" in~\cite{SGWGroup}, the zero forcing number has become a graph parameter being widely investigated for its own sake. In 2008, Aazami \cite{Aazami} proved the NP-hardness of computing the zero forcing number of a graph. So, it makes sense to establish sharp bounds on the zero forcing number for general graphs and to derive formulas for special graphs, see  \cite{Ferrero,Javaid,Kang1,Lu,Oboudi} for a selection of relevant results.

The general position number of a graph was introduced in~\cite{Manuel}. A couple of years earlier, however, the invariant was in different terminology  considered in~\cite{UllasChandran}. Moreover, in the special case of hypercubes it was much earlier studied in~\cite{korner-1995}. In~\cite{Anand}, general position sets in graphs were characterized. Several additional papers on the concept followed, many of them dealing with bounds on  the general position number and exact results in product graphs, Kneser graphs, and more, see~\cite{Ghorbani, Klavzar2, Klavzar1, neethu-2021, Patkos, thomas-2020, Tian, tian-2021}. In addition, the concept was very recently extended to the Steiner general position number~\cite{Klavzar-Kuziak}. 

Motivated by the comparative results between the zero forcing number and one of the central concepts of metric graph theory, the metric dimension, focusing on trees and unicyclic graphs~\cite{Eroh1, Eroh2}, we consider here the relation between the zero forcing number and the general position number. Now, from~\cite[Theorem 2]{Oboudi} we know that if $T$ is a tree on at least two vertices, then $\Z(T)\leq \ell(T)-1$, where $\ell(T)$ is the number of leaves of $T$. On the other hand, it was observed in~\cite[Corollary 3.7]{Manuel} that $\gp(T ) = \ell(T)$. Hence, if $T$ is a tree on at least two vertices, then
\begin{equation}
\label{eq:starting-point}
\gp(T )\geq \Z(T)+1\,.
\end{equation}
This relation prompted us to investigate whether there are additional larger families for which the zero forcing number is a lower bound for the general position number. We proceed as follows. In the next subsection the concepts studied are formally introduced and additional definitions stated. In Section~\ref{sec:unicyclic} we prove that if $G$ is a connected, unicyclic graphs $G$, then $\gp(G) \ge \Z(G)$. We also demonstrate that the inequality does not extend to bicyclic graphs. In Section~\ref{sec:block-graphs-quasi-trees} we first prove that~\eqref{eq:starting-point} holds for arbitrary block graphs. Then we show that the zero forcing number and the general position number are in general not related on quasi-trees. On the other hand, a large class of quasi-trees is found for which $\gp(G) \ge \Z(G)$ holds. We conclude the paper with three open problems.

\subsection{Definitions}

The order and the size of a graph $G = (V(G), E(G))$ will be respectively denoted by $n(G)$ and $m(G)$. Let $G$ be a connected graph. If $m(G) = n(G)-1$, then $G$ is a {\em tree}, if $m(G)=n(G)$, then $G$ is a {\em unicyclic graph}, and if $m(G)=n(G)+1$, then $G$ is a {\em bicyclic graph}. If  $G$ contains a vertex $v$, such that $G-v$ is a tree, then $G$ is a {\em quasi-tree}, the vertex $v$ is a {\em quasi-vertex} of $G$. A connected graph $G$ is a {\em block graph} if each 2-connected component of $G$ is a clique. Let $\mathcal{L}(G)$ denote the set of pendent vertices of $G$, so that $\ell(G)=|\mathcal{L}(G)|$.

For a graph $G$, assume that all its vertices are given one of two colors,  black and white by convention. Let $S$ denote the (initial) set of
black vertices of $G$. The {\em color-change rule} changes the color of a vertex from white to black if the white vertex $y$ is the only white neighbor of a black vertex $x$, and we say that $x$ forces $y$. Obviously, at each step of the color change, there may be two or more vertices capable of forcing the same vertex. The \emph{zero forcing number} $\Z(G)$ of $G$ is the minimum cardinality of a set $S$ of black vertices (while all vertices of $V(G)\setminus S$ are colored white) such that all vertices of $V(G)$ are turned black after finitely many applications of the color-change rule.

The distance $d_{G}(u, v)$ is the length of a shortest $u,v$-path in $G$. The interval $I_{G}(u, v)$ between vertices $u$ and $v$ is a vertex subset which consists of all vertices lying on shortest $u,v$-paths. A vertex subset $R$ of a graph $G$ is a {\em general position set} if no three vertices from $R$ lie on a common shortest path. The \emph{general position number} ($\gp$-number for short) $\gp(G)$ of $G$ is the number of vertices in a largest general position set of $G$. For convenience, we say that a largest general position set is a $\gp$-set.

For a positive integer $k$ we will use the notation $[k] = \{1,\ldots, k\}$.

%%%%%%%%%%%%%%%%%%%%%%%%%%%%%%%%%%%%%%%%
\section{Unicyclic graphs}
\label{sec:unicyclic}
%%%%%%%%%%%%%%%%%%%%%%%%%%%%%%%%%%%%%%%%

In this section we prove a result parallel to~\eqref{eq:starting-point} for unicyclic graphs. Before stating and proving the result, we demonstrate that it cannot be extended to bicyclic graphs.

Let $H_1$ be the top bicyclic graph from Fig.~\ref{fig:bicyclic}, and let $H_2$ be the bottom bicyclic graph from  the same figure.  $H_1$ actually represents a two-parametric family of graphs, but we will assume that $s$ and $t$ are fixed and denote the representative simply by $H_1$.

\begin{figure}[ht!]
\begin{center}
\includegraphics*[width=10.5cm]{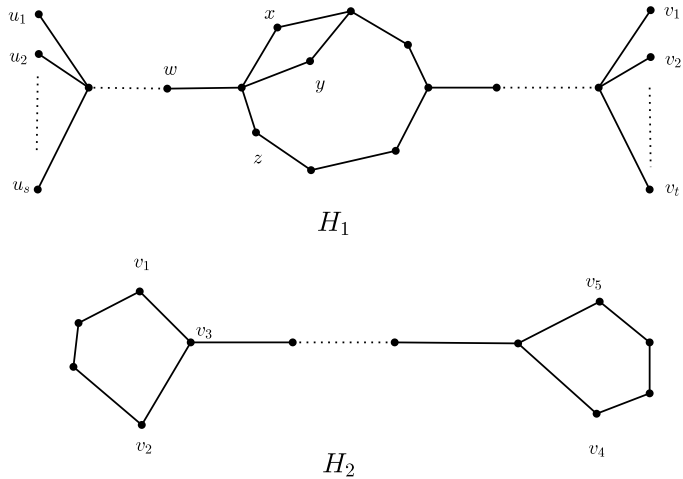}
\end{center}
\caption{Bicyclic graphs $H_{1}$ and $H_{2}$}
\label{fig:bicyclic}
\end{figure}

The set $\{u_{1},\ldots, u_{s}, x, y, v_{1}, \ldots, v_{t-1}\}$  is a minimum zero forcing set of $H_{1}$. Also, it can be seen that for $2\leq s+t\leq 3$, the set  $\{x, y, z, w\}$ is a gp-set  of $H_{1}$, while for  $ s+t\geq 4$, the set $\{u_{1}, \ldots, u_{s}, v_{1}, \ldots, v_{t}\}$  is a gp-set  of $H_{1}$. Thus, if $s+t\geq 4$,  then $\Z(H_{1})=s+t+1>s+t = \gp(H_{1})$, and if $2\leq s+t\leq 3$,  then $\Z(H_{1})=s+t+1\leq 4=|\{x, y, z, w\}| = \gp(H_{1})$. On the other hand, $\{v_{1}, v_{3}, v_{5}\}$  is a minimum zero forcing set of $H_{2}$ and $\{v_{1}, v_{2}, v_{4}, v_{5}\}$  is a gp-set  of $H_{2}$. Thus, $\Z(H_{2}) = 3 < 4 = \gp(H_{2})$.

We have thus seen that the zero forcing number and the general position number are incomparable on bicyclic graphs. On the other hand, the main result of this section asserts that the situation is different for unicyclic graphs.

\begin{theorem}
\label{thm:unicyclic}
If $G$ is a connected, unicyclic graph, then $\gp(G)\geq\Z(G)$.
\end{theorem}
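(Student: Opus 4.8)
The plan is to compare both parameters with the number of leaves $\ell(G)=|\mathcal{L}(G)|$. First I record the easy lower bound $\gp(G)\ge \ell(G)$: the set $\mathcal{L}(G)$ is always in general position, since a vertex of degree one is never an interior vertex of a shortest path, so no three leaves can lie on a common geodesic. Let $C$ be the unique cycle of $G$, and call a vertex of $C$ a \emph{branch vertex} if it has degree at least $3$ in $G$; write $b$ for the number of branch vertices. If $b=0$, then $G=C$ is a cycle; any two vertices trivially form a general position set and $\Z(C)=2$, so $\gp(G)\ge 2=\Z(G)$. Hence I may assume $b\ge 1$, and I root the tree $T_i$ hanging at each branch vertex $c_i$ at $c_i$; each $T_i$ is nontrivial and therefore contains at least one leaf of $G$, and I let $\lambda_i$ denote the number of leaves of $G$ lying in $T_i$, so $\sum_i\lambda_i=\ell(G)$.

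The heart of the argument is a forcing construction, for which I would isolate one rooted-tree subclaim: if a tree is rooted at $r$ and $r$ is pre-coloured black, then the tree can be zero forced by adding at most $k-1$ further vertices, where $k$ is the number of its leaves other than $r$ (a routine induction, the root supplying one ``free'' force). For $b\ge 2$ I build a zero forcing set $S$ of size at most $\ell(G)$ as follows. Choose one branch vertex $c_1$ and put into $S$ a minimum zero forcing set of the standalone tree $T_1$, which by~\cite[Theorem~2]{Oboudi} has at most $\ell(T_1)-1\le\lambda_1$ vertices and forces all of $T_1$, including $c_1$. Adjoin to $S$ one cycle-neighbour $w$ of $c_1$. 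Once $T_1$ is forced, $c_1$ and $w$ are two consecutive black vertices of $C$, so the colour-change rule sweeps around $C$ and blackens every cycle vertex, in particular every root $c_i$. For each $i\ge 2$ I add to $S$ the set from the subclaim for $T_i$, of size at most $\lambda_i-1$; after $c_i$ is blackened by the cycle these vertices force $T_i$. Confluence of the colour-change rule lets me ignore the order in which the cycle and the trees are processed, and a direct count gives $|S|\le \ell(G)-b+2\le \ell(G)$ since $b\ge 2$. Hence $\Z(G)\le \ell(G)\le \gp(G)$.

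It remains to treat $b=1$. The same construction (force $T_1$, inject one neighbour $w$ of $c_1$ to start the cycle) yields $\Z(G)\le \ell(G)+1$, so it suffices to strengthen the lower bound to $\gp(G)\ge \ell(G)+1$. As $c_1$ is the only branch vertex, all leaves lie in $T_1$ and every geodesic between two leaves stays inside $T_1$; in particular no such geodesic passes through a degree-two vertex of $C$, of which there are at least two. Adjoining any one such vertex to $\mathcal{L}(G)$ therefore preserves general position, giving $\gp(G)\ge \ell(G)+1\ge \Z(G)$. The step I expect to be the main obstacle is the case $b\ge 2$: both the rooted-tree subclaim and, above all, the bookkeeping that pins $|S|$ down to $\ell(G)-b+2$ must be carried out carefully, since it hinges on the interplay between $\ell(T_1)$ measured inside $T_1$ and the $G$-leaf counts $\lambda_i$, and on the single ``$+1$'' for $w$ being absorbed by the ``$-1$''s produced by the rooted trees.
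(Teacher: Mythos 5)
Your cases $b=0$ and $b=1$ are sound, and so is the general-position half of your argument ($\gp(G)\ge\ell(G)$ always, and $\gp(G)\ge\ell(G)+1$ when there is a unique branch vertex). The gap is the forcing construction in the main case $b\ge 2$: it does not produce a zero forcing set, and the bound $\Z(G)\le\ell(G)-b+2$ it is supposed to yield is false. The underlying problem is that zero forcing does not localize the way you assume: a minimum zero forcing set of the standalone tree $T_1$ need not force $T_1$ inside $G$, because in $G$ the root $c_1$ has two additional cycle neighbours, so every force that $c_1$ must perform inside $T_1$ is blocked while one of them is white. Symmetrically, the sweep around $C$ stalls at each branch vertex $c_i$ whose tree children are not yet all black. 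These two blockings feed each other and create a genuine deadlock; ``confluence'' cannot break it, since confluence only says the final black set is independent of the order of forces, not that a stalled process can be completed by interleaving.

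Concretely, let $G$ be obtained from $C_6$ with vertices $x_1,\ldots,x_6$ by attaching two pendent vertices to each of $x_1$, $x_3$, $x_5$. Then $\ell(G)=6$ and $b=3$, so your claim would give $\Z(G)\le\ell(G)-b+2=5$. However, trimming $G$ in the sense of~\cite{Barioli}, the vertices $x_1,x_3,x_5$ are appropriate, and deleting them together with the resulting isolated paths reduces $G$ to the empty graph with $n_1=3$ and $n_2=9$; hence $\PP(G)=n_2-n_1=6$, and Theorem~\ref{th0} gives $\Z(G)=\PP(G)=6>5$. One can also watch your set deadlock directly: with $S=\{a_1,x_2,a_3,a_5\}$ (a leaf $a_1$ at $x_1$, the cycle neighbour $x_2$ of $x_1$, and one leaf at each of $x_3,x_5$), after the forces $a_1\to x_1$, $x_2\to x_3$, $a_5\to x_5$, every black vertex that still has a white neighbour has at least two of them, and nothing more can be forced; since $\Z(G)=6$, no choice within your scheme (which uses at most $5$ vertices) can work. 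So the arithmetic that absorbs the ``$+1$'' for $w$ into the ``$-1$''s of the rooted trees is not just delicate but unrepairable; what you would minimally need is $\Z(G)\le\ell(G)$ for $b\ge 2$, and establishing that is essentially as hard as the theorem itself. This deadlock phenomenon is exactly why the paper does not build forcing sets by hand: it instead invokes Row's equality $\Z(G)=\PP(G)$ for connected unicyclic graphs (Theorem~\ref{th0}) and compares $\gp(G)$ with the path cover number through the trimming machinery of~\cite{Barioli}, namely $\PP(G)=\PP(\breve{G})+n_2-n_1$, which correctly accounts for configurations like the example above.
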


The rest of the section is devoted to the demonstration of Theorem~\ref{thm:unicyclic}.

The \emph{path cover number} $\PP(G)$  of a graph $G$ is the smallest positive integer $k$ such that there are $k$ vertex-disjoint induced paths in $G$ such that every vertex of $G$ is a vertex of one of the paths. It was proved in~\cite{Hogben} that $\PP(G)\leq \Z(G)$ holds for each graph $G$. For   unicyclic graphs, Row proved the following stronger result.

\begin{theorem}{\rm \cite[Theorem 4.6]{Row}}
\label{th0}
If $G$ is a connected unicyclic graph, then $\Z(G)=\PP(G)$.
\end{theorem}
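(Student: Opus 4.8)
Since~\cite{Hogben} already gives $\PP(G)\le\Z(G)$ for every graph, it suffices to establish the reverse inequality $\Z(G)\le\PP(G)$ for a connected unicyclic graph $G$. The plan is to start from a minimum induced path cover $\mathcal P=\{P_1,\dots,P_k\}$, $k=\PP(G)$, and to build from it a zero forcing set of size $k$, thereby witnessing $\Z(G)\le k$. This is natural because the forcing chains of any zero forcing set are themselves induced paths (if a chain vertex $x$ were adjacent to a later chain vertex $z$, then at the moment $x$ forced its successor it would have had two white neighbours), so $\Z$ and $\PP$ measure the same kind of object from two directions; the task is to show that in the unicyclic case a path cover can always be \emph{oriented} into a successful forcing process.

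First I would record the combinatorial structure of $\mathcal P$. Contract each $P_i$ to a single vertex to obtain a connected multigraph $\mathcal H$ on $k$ vertices whose edges are exactly the edges of $G$ joining distinct paths (the \emph{broken} edges); there are no loops, since each $P_i$, being induced, is chordless. Counting edges, the path edges number $\sum_i(|P_i|-1)=n(G)-k$, so the number of broken edges is $m(G)-(n(G)-k)=k$, using $m(G)=n(G)$. Hence $\mathcal H$ has $k$ vertices and $k$ edges and, being connected, carries a unique cycle (cyclomatic number one); this cycle is the image of the unique cycle $C$ of $G$, since no single induced path can contain all of $V(C)$ and so $C$ genuinely spans at least two of the paths.

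The forcing set is then produced by selecting one endpoint from each path and prescribing a forcing order. For a vertex $u\in P_i$, the rule to force the successor of $u$ along $P_i$ requires every neighbour of $u$ lying in another path to be black beforehand, so the admissible orders are governed by the dependencies encoded in $\mathcal H$. Deleting one edge of the unique cycle of $\mathcal H$ leaves a spanning tree $\mathcal T$; rooting $\mathcal T$ yields an acyclic dependency along which the \emph{tree branches} of $G$ can be forced from their outer leaves inward, exactly as in the classical tree identity behind~\eqref{eq:starting-point}. For each path meeting such a branch I would take the endpoint farthest from the cycle as its black seed.

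The genuinely new point — and the step I expect to be the main obstacle — is the single cycle of $\mathcal H$, which creates a circular dependency that no purely leaf-to-root order can resolve. To break it I would exploit the one extra broken edge counted above: choosing as seeds the two endpoints flanking one gap of $C$ places two adjacent black vertices on the cycle, which jump-starts the forcing so that two fronts sweep around $C$ in opposite directions and spill outward into the attached branches, precisely as two adjacent vertices force a bare cycle. The delicate part is to show that this cycle-seeding can be made \emph{consistent} with the leaf-to-root seeding of the branches while still using exactly one seed per path — in particular when a path carries both cycle vertices and branch vertices, or meets $C$ in more than one arc — and to verify that a single global order realising all these forces exists. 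Establishing this compatibility, rather than the arithmetic of the edge count, is where the real work lies.
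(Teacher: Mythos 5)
The statement you were asked to prove is not proved in the paper at all: it is quoted verbatim from Row \cite[Theorem~4.6]{Row}, whose actual argument proceeds by cut-vertex reductions and the trimmed form machinery of Barioli--Fallat--Hogben (deleting appropriate vertices, isolated paths and peripheral leaves, with matching spread formulas $\PP(G)=\PP(\breve{G})+n_2-n_1$ and $\Z(G)=\Z(\breve{G})+n_2-n_1$, the latter via the vertex-spread results of Edholm et al.), finally computing both parameters explicitly on partial suns. Your opening moves are sound: $\PP(G)\le\Z(G)$ from~\cite{Hogben}, forcing chains being induced paths, and the count showing that a minimum cover $\mathcal{P}$ with $k=\PP(G)$ paths leaves exactly $k$ broken edges, so the contracted multigraph $\mathcal{H}$ has cyclomatic number one. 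But the proposal stops precisely where the theorem begins. You explicitly defer ``establishing this compatibility,'' i.e.\ proving that some choice of one endpoint per path admits a global order in which every force is legal. That existence claim \emph{is} the inequality $\Z(G)\le\PP(G)$; no argument for it is given, so what you have is a plan with the pivotal lemma missing, not a proof.

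Moreover, the one concrete mechanism you do propose fails in a natural configuration. Take a minimum cover in which one path $P_i$ is $C$ minus a single vertex $v$ (this is an induced path, since $C$ is chordless in a unicyclic graph) and $v$ lies on another path $P_j$ entering the cycle from a branch. Then the unique cycle of $\mathcal{H}$ is a double edge between $P_i$ and $P_j$, and the ``two endpoints flanking one gap of $C$'' are the two ends of the \emph{same} path $P_i$: seeding both blows your one-seed-per-path budget. Here the circular dependency must instead be broken by forcing $v$ from the tree side along $P_j$, halting $P_j$ at $v$, sweeping a single front around $C$ from one end of $P_i$, and only then letting $v$ force its successor --- a mechanism your two-front scheme does not supply, and which already shows that the endpoint selection and ordering need a genuine case analysis or an induction. (Incidentally, your worry about a path meeting $C$ in more than one arc is vacuous: in a unicyclic graph every branch attaches to the cycle at one vertex, so an induced path intersects $C$ in a single, possibly empty, arc.) To finish along your route you would need a lemma that a minimum path cover can be chosen, seeded and ordered compatibly --- which is essentially why Row's published proof sidesteps the direct construction and reduces to partial suns instead.
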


For the proof of Theorem~\ref{thm:unicyclic} we need to recall several concepts and results from~\cite{Barioli, Row}.

Let $G$ be a graph and $x$ a vertex of $G$. If $G-x$ has at least two components  which are paths, each joined to $x$ in $G$ at only one endpoint, then vertex $x$ is called \emph{appropriate}. A vertex $x$ is called a \emph{peripheral leaf} if $x$ is adjacent to
only one other vertex $y$, and $y$ is adjacent to no more than two vertices. The \emph{trimmed form} $\breve{G}$ of a graph $G$ is an induced subgraph of $G$ obtained by a sequence of deletions of appropriate vertices, isolated paths, and peripheral leaves until no more such deletions are possible. Barioli, Fallet, and Hogben~\cite{Barioli} proved that $\breve{G}$ is unique. If $\breve{G}$ is obtained from $G$ by performing $n_1$ deletions of appropriate vertices, $n_2$ deletions of isolated paths, and $n_3$ deletions of peripheral leaves, then $\PP(G) = \PP(\breve{G}) + n_2-n_1$~\cite{Barioli}.

Let $C_n$ be an $n$-cycle and let $U\subseteq V(C_n)$.  The graph $H$ obtained from $C_n$ by appending a leaf to each vertex of $U$ is called a \emph{partial sun}. The term \emph{segment} of $H$ will refer to any maximal subset of consecutive vertices in $U$. The segments of $H$ will be denoted $U_{1}, \ldots, U_{t}$. For a partial sun $H$ with segments $U_{1}, \ldots, U_{t}$, it was proved in~\cite{Barioli} that $\PP(H) = \max\{2, \sum_{i=1}^{t}\lceil \frac{|U_{i}|}{2}\rceil\}$. The trimmed form of a unicyclic graph $G$ is either the empty graph or a partial sun \cite{Barioli}. For an example see~Fig.~\ref{fig:trimmed} and note that $\breve{G}$ is a partial sun.

\begin{figure}[ht!]
\begin{center}
\includegraphics*[width=12cm]{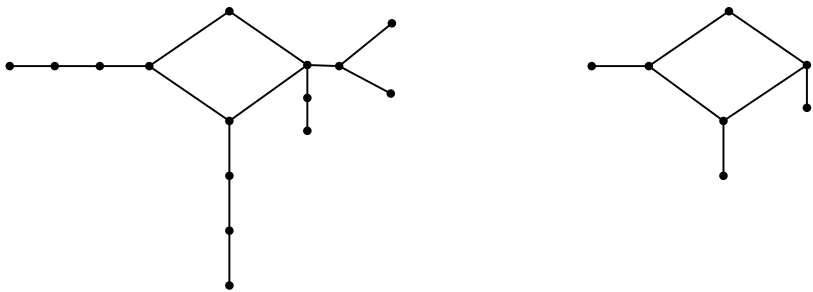}
\end{center}
\vspace*{-0.7cm}
\caption{A unicyclic graph $G$ (left) and its trimmed form $\breve{G}$ (right)}
\label{fig:trimmed}
\end{figure}

The following properties of unicyclic graphs with respect to  their trimmed graphs will enable us to derive Theorem~\ref{thm:unicyclic}.

\begin{theorem}\label{th001}
Let $G$ be a connected, unicyclic graph and let $\breve{G}$, $U$, $n_{1}$, and $n_{2}$ be defined as above. Then the following hold.
\begin{enumerate}
\item[(i)] If $\breve{G}$ is a partial sun, then $\gp(G)\geq \max\{2, |U|\}+n_{2}-n_{1}$.
\item[(ii)] If $\breve{G}$ is the empty graph, then $n_{2}-n_{1}\leq \ell(G)$.
\end{enumerate}
\end{theorem}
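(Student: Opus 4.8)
The plan is to reduce both parts to one leaf-counting inequality, using the elementary fact that the set $\mathcal{L}(G)$ of all pendant vertices is a general position set: a vertex of degree one can only be an endpoint of a geodesic, so no three leaves lie on a common shortest path, whence $\gp(G)\ge \ell(G)$ for every graph. Writing $\ell^{*}(H)$ for the number of vertices of degree at most one in $H$ (so that isolated vertices, which appear as single-vertex paths during trimming, are also counted), I would prove the master inequality
\[
\ell^{*}(H)\ \ge\ \ell^{*}(\breve{H})+n_{2}(H)-n_{1}(H)
\]
for every graph $H$ all of whose components are trees or unicyclic, where $n_{1},n_{2}$ count the appropriate-vertex and isolated-path deletions used in trimming $H$. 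A connected unicyclic $G$ has no isolated vertices, so $\ell^{*}(G)=\ell(G)$; with $\breve{G}$ empty this reads $n_{2}-n_{1}\le \ell(G)$, which is exactly~(ii), and with $\breve{G}$ a partial sun it reads $\ell(G)\ge |U|+n_{2}-n_{1}$, which will drive~(i).

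I would prove the master inequality by strong induction on $|V(H)|$ via the potential $\Phi(H)=\ell^{*}(H)-(n_{2}(H)-n_{1}(H))$, aiming at $\Phi(H)\ge \Phi(\breve H)=\ell^{*}(\breve H)$. A peripheral-leaf deletion keeps $\Phi$ fixed, since the deleted leaf is replaced by its degree-two neighbour, which becomes a leaf; an isolated-path deletion cannot raise $\Phi$, because deleting a path component $P$ lowers $\ell^{*}$ by $\ell^{*}(P)\ge 1$ and lowers $n_{2}-n_{1}$ by exactly one. When only appropriate deletions remain I would delete a \emph{simple} one, i.e. a vertex $x$ such that $H-x$ has at most one component that is not a pendant path; such $x$ always exists (a deepest branch vertex of a hanging tree, or else a cycle vertex bearing at least two pendant paths). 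Splitting $H-x$ into the pendant paths $P_{1},\dots,P_{k}$ and the single remaining component $Q$, and applying the induction hypothesis to $Q$, settles the step at once when $x$ is off the cycle, because then $x$ has only one neighbour in $Q$.

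The delicate case, and the first main obstacle, is a simple appropriate vertex $x$ lying on the cycle: deleting it breaks the cycle and its two cycle-neighbours $u,v$ lie in $Q$, where they may be leaves of $Q$ that fail to be leaves of $H$, costing up to two when passing from $\ell^{*}(Q)$ to $\ell^{*}(H)$. This deficit is absorbed by the stronger estimate available once $Q$ is a tree: combining $\PP(Q)\le \Z(Q)$ with the tree inequality $\Z(Q)\le \ell(Q)-1$ recalled in the Introduction gives $\ell^{*}(Q)-(n_{2}(Q)-n_{1}(Q))\ge 1$, precisely the slack needed. This is where unicyclicity is used decisively, as the cycle can be destroyed at most once.

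It remains to upgrade $\ell(G)\ge |U|+n_{2}-n_{1}$ to $\gp(G)\ge \max\{2,|U|\}+n_{2}-n_{1}$ in~(i). If $|U|\ge 2$, or more generally if $\ell(G)\ge 2+n_{2}-n_{1}$, this is immediate from $\gp(G)\ge \ell(G)$. In the remaining tight case $|U|\le 1$ and $\ell(G)\le 1+n_{2}-n_{1}$ I would enlarge $\mathcal{L}(G)$ by adjoining $2-|U|$ vertices of the surviving cycle $C$. The key structural lemma is that every hanging tree that is trimmed away entirely contributes at least one unit of slack to the master inequality — again via $\PP(T)\le \ell(T)-1$ for its tree $T$ — so tightness forces at most two vertices of $C$ to carry any branches of $G$. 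Consequently every geodesic between two leaves meets $C$ only along the single shorter arc joining these (at most two) base vertices, and any cycle vertex taken from the complementary arc lies on no leaf-to-leaf geodesic; choosing $2-|U|$ such vertices keeps the set in general position and yields the required bound. Verifying this structural lemma and the general position of the enlarged set is the second main obstacle, but the slack accounting above makes both tractable.
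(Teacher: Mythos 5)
Your master inequality is essentially sound, and it is a genuinely more unified organization than the paper's: the stepwise potential argument works for peripheral-leaf and isolated-path deletions, the combined step (a ``simple'' appropriate vertex together with its pendant paths) loses nothing when that vertex is off the cycle, and the one-unit deficit when it lies on the cycle is correctly absorbed by the tree slack $n_2(Q)-n_1(Q)=\PP(Q)\le \Z(Q)\le \ell(Q)-1$, every ingredient of which is citable from the paper's references (Barioli, Fallat and Hogben for $\PP(G)=\PP(\breve{G})+n_2-n_1$, which also makes $n_2-n_1$ independent of the deletion order you choose; Hogben for $\PP\le \Z$; Oboudi for trees). This gives (ii) at once and gives (i) whenever $|U|\ge 2$, whereas the paper proves (ii) by a separate induction on $n(G)+n_1$ and (i) by a case analysis (its Claims~1 and~2).

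The genuine gap is in your tight case of (i), $|U|=1$ with $\ell(G)=1+n_2-n_1$. The claim ``tightness forces at most two vertices of $C$ to carry branches'' is not what your slack lemma yields, and it is not enough: if two cycle vertices $v_1,v_2$ carried hanging trees with leaves and the two $v_1,v_2$-arcs of $C$ had equal length (antipodal vertices on an even cycle), then leaf-to-leaf geodesics run along \emph{both} arcs, there is no ``single shorter arc'', every vertex of $C$ lies on some leaf-to-leaf geodesic, and adjoining any cycle vertex to $\mathcal{L}(G)$ creates three points on a common geodesic --- the enlargement step fails. What saves the argument is the stronger consequence of your own slack lemma: if each entirely-trimmed hanging tree contributes $+1$, then $\ell(G)\ge |U|+(n_2-n_1)+E$, where $E$ counts such trees, so with $|U|=1$ either $E\ge 1$ and the leaves alone already give $\gp(G)\ge 2+n_2-n_1$, or $E=0$ and the $U$-vertex $v$ is the \emph{unique} branch vertex of $G$ on $C$, in which case leaf-to-leaf geodesics never leave $T_G(v)$ and a cycle vertex other than $v$ (the paper adjoins both neighbours $u,w$ of $v$) can be added in general position. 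You need to state and prove this dichotomy rather than ``at most two''. Relatedly, justifying the slack lemma by quoting $\PP(T)\le \ell(T)-1$ for the hanging tree is too quick: Barioli's formula relates $n_2-n_1$ to path covers only for the trimming of a whole graph, while the deletions that remove a hanging tree inside $G$ are constrained by ambient degrees (the root's neighbour can never be a peripheral leaf and must eventually be deleted as an appropriate vertex), so the per-tree slack must be carried as a strengthened induction hypothesis (the term $E$ above) rather than cited; once folded into your induction it does go through.
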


\begin{proof}
(i)
Since $\breve{G}$ is a partial sun, $G$ is not a cycle. Let $C_{l}$ be the unique cycle of $G$.

For any branch vertex $v$ on the cycle $C_{l}$, we denote by $T_{G}(v)$ the subtree containing $v$ of $G-\{u, w\}$, where $u\in N_{C_{l}}(v)$ and $w\in N_{C_{l}}(v)$. Such a subtree $T_{G}(v)$ is called a \emph{root tree} at $v$. Let $U_{1}, \ldots, U_{t}$ be the segments of $\breve{G}$.

\begin{claim}\label{clm01}
If $G$ has no appropriate vertices and $x$ is a vertex of $V(\breve{G})$ with $d_{\breve{G}}(x)=2$, then $d_{G}(x)=2$.
\end{claim}
\begin{proof}
Suppose to the contrary
that $d_{G}(x)\geq 3$. Since $G$ has no appropriate vertices, $x$ is a branch vertex on $C_{l}$. Since $G$ has no appropriate vertices,  $T_{G}(x)$ is a path with $x$ being one end-vertex. So, we need to turn $x$ into a 2-degree vertex in $\breve{G}$ by a sequence of deletions of appropriate vertices, isolated paths, and peripheral leaves.
By our assumption that $G$ has no appropriate vertices, we can only use the trimmed operation on $T_{G}(x)$ by repeatedly deleting peripheral leaves such that $x$ is turned into a 2-degree vertex. This is impossible by the definition of  peripheral leaves. Thus, $d_{\breve{G}}(x)=3$, a contradiction.
\end{proof}

We now distinguish the following two cases.

\begin{case}
$G$ has no appropriate vertices.
\end{case}
\noindent
In this case $n_{1}=n_{2}=0$. Moreover, for any branch vertex $v$ on $C_{l}$, $T_{G}(v)$ is a path with $v$ being its one end-vertex. Thus, $\ell(G)=\sum_{i=1}^{t}|U_{i}|=|U|$.

When $|U|=1$, by Claim~\ref{clm01} and our assumption that $G$ has no appropriate vertices, $G$ has only one branch vertex, say $v$, on $C_{l}$. Let $u$ and $w$ be two neighbors of $v$ on $C_{l}$. Clearly,  $U=\{v\}$. Note that in the current case, $\breve{G}$ can only be obtained from $G$ by a sequence of deletions of  peripheral leaves. Then  $\ell(G)=1$. So, $\{u, w\}\cup \mathcal{L}(G)$ forms a general position set.
Thus, $\gp(G)\geq |\{u, w\}\cup \mathcal{L}(G)|=3>2=\max\{2, |U|\}+n_{2}-n_{1}$. Now, we assume that $|U|\geq 2$. Then  the set of all pendent vertices of $G$ forms a general position set. So, $\gp(G)\geq \ell(G)=|U|= \max\{2, |U|\}+n_{2}-n_{1}$.

\begin{case}
$G$ has at least one appropriate vertex.
\end{case}
\noindent
In this case $n_{1}\geq 1$ and $n_{2}\geq 2n_{1}$.
Since $\breve{G}$ is a partial sun, $C_{l}$ does not contain appropriate vertices. So, all appropriate vertices of $G$ belong to the set $V(G)\setminus V(C_{l})$. Let  $B(G)$ be the set of branch vertices of $G$.

First, we assume that for
any vertex $v$ of $U$, the subtree $T_{G}(v)$ does not contain an appropriate vertex. Hence any such subtree $T_{G}(v)$ is a path with $v$ being one
end-vertex, and then $\ell(\breve{G})=|U|$. Moreover, since $\breve{G}$ is a partial sun, all appropriate vertices  are contained
in $\bigcup_{x\in B(G)\setminus U} (V(T_{G}(x))\setminus\{x\})$.
Then $|\mathcal{L}(G)\setminus\mathcal{L}(\breve{G})|\geq n_{2}$, that is, $\ell(G)\geq n_{2}+\ell(\breve{G})=n_{2}+|U|$. Since  the set of all pendent vertices of $G$ forms a general position set of $G$, $\gp(G)\geq \ell(G)\geq n_{2}+|U|$. If $|U|=1$, as
$n_{1}\geq 1$, then  $\gp(G)\geq n_{2}+1\geq \max\{2, |U|\}+n_{2}-n_{1}$.
If $|U|\geq 2$, as
$n_{1}\geq 1$, then  $\gp(G)\geq \ell(G)\geq n_{2}+|U|>\max\{2, |U|\}+n_{2}-n_{1}$.

Assume that $U$ has $s\in [|U|]$ vertices each of whose root trees contains at least one appropriate vertex. Since for some branch vertex $v\in V(C_{l})\setminus U$, the set $V(T_{G}(v))\setminus \{v\}$ may contain appropriate vertices, we have $s\leq n_{1}$. Thus, $\ell(G)\geq n_{2}+(|U|-s)\cdot 1\geq |U|+n_{2}-n_{1}$.

If $|U|\geq 2$, then because the set of all pendent vertices of $G$ forms a general position set of $G$, we get $\gp(G)\geq \ell(G)\geq |U|+n_{2}-n_{1}= \max\{2, |U|\}+n_{2}-n_{1}$.

Suppose now that $|U|=1$. Let $U=\{v\}$, and let $N(v) \cap V(C_l) = \{u, w\}$. Then $T_{G}(v)$  contain at least one appropriate vertex. We first prove the following claim.

\begin{claim}\label{clm02}
Let $G$ be a unicyclic graph with $\breve{G}$ being a partial sun. If the unique cycle $C_{l}$ of $G$ has $q$ branch vertices, say $v_{1},\,\ldots,\,v_{q}\,(q\geq 1)$, such that each $V(T_{G}(v_{i}))\setminus \{v_{i}\}$ ($i\in [q]$) contains appropriate vertices, then $\ell(G)\geq n_{2}-n_{1}+q$.
\end{claim}
\begin{proof}
Since $\breve{G}$ is a partial-sun, we suppose as before that  $G$ can be reduced to $\breve{G}$  by performing $n_1$ deletions of appropriate vertices outside $C_{l}$, $n_2$ deletions of isolated paths outside $C_{l}$, and $n_3$ deletions of peripheral leaves outside $C_{l}$. Note that each step of deletion of an old appropriate vertex and the corresponding isolated paths from $G$ will produce at most one new appropriate vertex in the resulting subgraph of $G$. Moreover, if a new appropriate vertex is born with the process of deletion of an old appropriate vertex in $G$, then a new pendent path must be produced in the resulting subgraph at the same time (this new pendent path becomes
a new isolated path in future). Hence, if there are  $p$ new appropriate vertices produced during the process of trimming $G$,  then $G$ has at least $n_{2}-p$ pendent vertices. Since  each $V(T_{G}(v_{i}))\setminus \{v_{i}\}$ ($i\in [q]$) contains at least one appropriate vertex, we have $p\leq n_{1}-q$.   Therefore $\ell(G)\geq n_{2}-p\geq n_{2}-n_{1}+q$.
\end{proof}

If $q\geq 2$, then since the set of all pendent vertices of $G$ forms a general position set, we get $\gp(G)\geq \ell(G)\geq n_{2}-n_{1}+2=\max\{2, |U|\}+n_{2}-n_{1}$ by Claim \ref{clm02} and our assumption that $|U|=1$. Assume hence that $q=1$. Then all appropriate vertices of $G$ belongs to $T_{G}(v)$. The fact that $|U|=1$  together with Claim~\ref{clm01} yields that $G$ has $v$ as its unique branch vertex.
Thus, $\{u, w\}\cup \mathcal{L}(G)$ forms a general position set which in turn implies that $\gp(G)\geq |\{u, w\}\cup \mathcal{L}(G)|= \ell(G)+2\geq (n_{2}-n_{1}+1)+2>n_{2}-n_{1}+2=\max\{2, |U|\}+n_{2}-n_{1}$ by Claim \ref{clm02}. This proves (i).

\medskip
(ii) Assume that $G$ can be reduced to $\breve{G}$  by performing $n_1$ deletions of appropriate vertices, $n_2$ deletions of isolated paths, and $n_3$ deletions of peripheral leaves. Since $\breve{G}$ is the empty graph, $G$ has at least one appropriate vertex, that is, $n_{1}\geq 1$. Let $C_{l}$ be the unique cycle in $G$.  We proceed by induction on $n(G)+n_{1}$.

If $n_{1}=1$, then since $\breve{G}$ is the empty graph, the unique appropriate vertex, say $v$, must lie on the cycle $C_{l}$. Moreover, as $n_{1}=1$, the deletion of $v$ results in only isolated paths. Among all isolated paths of $G-v$, there is at most one isolated path whose two end-vertices are not pendent vertices of $G$. So, $n_{2}\leq \ell(G)+1$. Thus, $n_{2}-n_{1}\leq (\ell(G)+1)-1=\ell(G)$.

Let now $k>n(G)+1$. Assume that $n'_2-n'_1 \leq  \ell(G')$ holds for all unicyclic graphs $G'$ with $n(G')+n'_{1}<k $ and $\breve{G}'$ being the empty graph. Let $G$ be an unicyclic graph, with $n(G)+n_{1}=k$, which can be reduced to the empty graph $\breve{G}$  by performing $n_{1}$ deletions of appropriate vertices.

Assume first that there exists an appropriate vertex, say $v$, which lies outside $C_{l}$. Perform one step of the deletion of the appropriate vertex $v$, and the deletion of isolated paths in $G-v$ corresponding to $v$, and denote the resulting unicyclic graph by $G'$. Clearly, $\breve{G}'$ is also the empty graph. Assume that $G'$ can be reduced to  $\breve{G}'$ by performing $n'_1$ deletions of appropriate vertices, $n'_2$ deletions of isolated paths, and $n'_3$ deletions of peripheral leaves. Then $n'_{1}=n_{1}-1$. As $n(G')+n'_{1}<k $, by the induction hypothesis, $n'_{2}-n'_{1}\leq \ell(G')$ holds for  $G'$. Since $v$ is an appropriate vertex outside the cycle, one step of the deletion of $v$ and the corresponding isolated paths will produce at most one new pendent path in $G'$. Assume that there are $t$ pendent paths attaching to $v$ in $G$. Then $n_{2}= n'_{2}+t$. So, $n_{2}-n_{1}=  (n'_{2}+t)-(n'_{1}+1)=(n'_{2}-n'_{1})+(t-1)\leq \ell(G')+(t-1)\leq \ell(G)$.

Assume second that all the appropriate vertices of $G$ lie on the cycle $C_{l}$. Let $v$ be an arbitrary  appropriate vertex. Set $G'=G[V(G)\setminus (V(T_{G}(v))\setminus \{v\}$)]. Then $G'$ is a unicyclic graph whose unique cycle is still $C_{l}$ and $d_{G'}(v)=2$. Obviously, $\breve{G}'$ is also the empty graph. Assume that $G'$ can be reduced to  $\breve{G}'$ by performing $n'_1$ deletions of appropriate vertices, $n'_2$ deletions of isolated paths, and $n'_3$ deletions of peripheral leaves. Then $n'_{1}= n_{1}$ or $n'_{1}= n_{1}-1$. Since $n(G')+n'_{1}<k $, by the induction hypothesis, $n'_{2}-n'_{1}\leq \ell(G')$. Assume that $v$ is attached to $t$  pendent paths in $G$. By the construction of $G'$ and our assumption that $v$ is an appropriate vertex lying on $C_{l}$, we infer that $n_{2}\leq n'_{2}+t$.

So, if $n'_{1}= n_{1}-1$, then $n_{2}-n_{1}\leq  (n'_{2}+t)-(n'_{1}+1)=(n'_{2}-n'_{1})+(t-1)< \ell(G')+t= \ell(G)$, and if $n'_{1}= n_{1}$, then $n_{2}-n_{1}\leq  (n'_{2}+t)-n'_{1}=(n'_{2}-n'_{1})+t\leq\ell(G')+t= \ell(G)$.
\end{proof}

Now all is ready to prove Theorem~\ref{thm:unicyclic}. If $G$ is a cycle graph, then $\gp(G)\geq 2=\Z(G)$. Assume in the rest that $G$ is not a cycle. Let $\breve{G}$ be obtained from $G$ by a sequence of $n_1$ appropriate vertex deletions, $n_2$ isolated path deletions, and $n_3$ peripheral leaf deletions. Recall that $\breve{G}$ is either the empty graph or a partial sun, and consider the following two cases.

Assume that $\breve{G}$ is a partial sun and let $U_{1}, \ldots, U_{t}$ be the segments of $\breve{G}$ with $\sum_{i=1}^{t}|U_{i}|=|U|$.  From~\cite{Barioli}, we know that $\PP(G)=\PP(\breve{G})+n_{2}-n_{1}$ and $\PP(\breve{G})= \max\{2, \sum_{i=1}^{t} \lceil \frac{|U_{i}|}{2}\rceil\}$. Since $|U_{i}|\geq 1$, we have $\sum_{i=1}^{t}\lceil \frac{|U_{i}|}{2}\rceil\leq |U|$. By Theorem \ref{th001}(i), we have  $\gp(G)\geq \max\{2, |U|\}+n_{2}-n_{1}\geq \max\{2, \sum_{i=1}^{t}\lceil \frac{|U_{i}|}{2}\rceil\}+n_{2}-n_{1}=\PP(\breve{G})+n_{2}-n_{1}=\PP(G)$. Combining this fact with Theorem~\ref{th0} gives $\gp(G)\geq \Z(G)$.

Assume second that $\breve{G}$ is the empty graph. By the proof of~\cite[Theorem 4.6]{Row} we have $\Z(G)=\Z(\breve{G})+n_{2}-n_{1}=n_{2}-n_{1}$. Further, by Theorem~\ref{th001}(ii) we have $\Z(G)=n_{2}-n_{1}\leq \ell(G)$. On the other hand, since  the set of all pendent vertices of $G$ forms a general position set, $\gp(G)\geq \ell(G)$. Therefore, $\gp(G)\geq \Z(G)$.

%%%%%%%%%%%%%%%%%%%%%%%%%%%%%%%%%%%
\section{Block graphs and quasi-trees}
\label{sec:block-graphs-quasi-trees}
%%%%%%%%%%%%%%%%%%%%%%%%%%%%%%%%%%%

In this section we consider two broad generalizations of trees---block graphs and quasi-trees---and relate them to~\eqref{eq:starting-point}. In the main result of the section we prove that~\eqref{eq:starting-point} extends to all block graph. Then we demonstrate that the zero forcing number and the general position number are not comparable on quasi-trees. On the positive side we show that $\gp(G)\geq \Z(G)+1$ still holds for a rich class of quasi-trees $G$. We conclude the section by showing that~\eqref{eq:starting-point} naturally extends to forests.

Before proving the result for block graphs, some preparation is needed. A vertex of a graph is \emph{simplicial} if its neighbours induce a complete subgraph. A block in a graph is said to a \emph{pendent block} if it has exactly one cut vertex. Note that all vertices but one of a pendent block are simplicial vertices.

\begin{theorem}\label{th3}
If $G$ is a block graph with $n(G)\ge 2$, then $\gp(G)\geq \Z(G)+1$.
\end{theorem}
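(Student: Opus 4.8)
The plan is to bound both invariants by the number of simplicial vertices of $G$. Write $\mathcal{S}(G)$ for the set of simplicial vertices and $s=|\mathcal{S}(G)|$. I will show that $\gp(G)\ge s$ and $\Z(G)\le s-1$; combining these gives $\gp(G)\ge s\ge \Z(G)+1$. This specializes correctly to trees, where the simplicial vertices are exactly the leaves, so $s=\ell(G)$ and we recover~\eqref{eq:starting-point}. Note also that, since $G$ is connected with $n(G)\ge 2$, a short look at the block-cut tree (either $G$ is a single clique $K_n$ with $n\ge 2$, or it has at least two pendent blocks, each contributing a simplicial vertex) shows $s\ge 2$, so both bounds are nonvacuous.

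For the lower bound I claim $\mathcal{S}(G)$ is itself a general position set, whence $\gp(G)\ge s$. Indeed, a simplicial vertex can never be an interior vertex of a shortest path: if $x$ were interior to a shortest path $P$ with $P$-neighbours $a$ and $b$, then $a$ and $b$ would be adjacent (as two neighbours of the simplicial vertex $x$), contradicting that $P$ is shortest. Hence if three vertices of $\mathcal{S}(G)$ lay on a common shortest path, the middle one would be such an interior vertex, which is impossible. Thus no three simplicial vertices lie on a common shortest path, and $\gp(G)\ge|\mathcal{S}(G)|=s$.

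The substance is the bound $\Z(G)\le s-1$, which I would prove by exhibiting an explicit zero forcing set of size $s-1$. Fix a simplicial vertex $z$ lying in a pendent block $B_0$ (any vertex will do if $G=K_n$), and colour black the set $S=\mathcal{S}(G)\setminus\{z\}$. Since in a block graph the non-simplicial vertices are precisely the cut vertices, the only white simplicial vertex is then $z$ and every other initially white vertex is a cut vertex. Root the block-cut tree at $B_0$ and process the blocks from the deepest toward $B_0$; for each block $B\ne B_0$ let $c_B$ be its cut vertex nearest $B_0$. The plan is to argue along this order that, by the time $B$ is processed, every vertex of $B$ other than $c_B$ is already black: the simplicial ones are black by the choice of $S$, and those that are cut vertices toward deeper blocks were blackened while processing those (already completed) blocks. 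Choosing any black vertex $v\in V(B)\setminus\{c_B\}$, its neighbours outside $B$ lie in already-completed deeper blocks and so are black, while inside $B$ only $c_B$ is white; hence $v$ forces $c_B$. Iterating completes every block, and finally in $B_0$ all vertices but $z$ are black, so any of them forces $z$.

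The step I expect to be the main obstacle is exactly this ordering argument: one must verify that the vertex $v$ used to blacken $c_B$ has no \emph{other} white neighbour, and this hinges on two facts working together---that all pendant simplicial vertices are black from the outset (so nothing ``hangs'' white off an internal cut vertex, which is the pitfall that breaks naive inductions that re-attach a pendent block only partially), and that the deepest-first processing guarantees the deeper side of $v$ is already fully black. Once this is established, $|S|=s-1$ yields $\Z(G)\le s-1$, and together with $\gp(G)\ge s$ we conclude $\gp(G)\ge \Z(G)+1$.
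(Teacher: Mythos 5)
Your proposal is correct, and at the top level it follows the same strategy as the paper: both arguments bound the two invariants by the number $s$ of simplicial vertices, using that in a block graph every vertex is either simplicial or a cut vertex, so $s=n(G)-k$ where $k$ is the number of cut vertices. The execution, however, differs in the key technical step. For $\gp(G)\ge s$ you give a short self-contained argument (a simplicial vertex is never internal to a shortest path), while the paper cites \cite[Theorem 3.6]{Manuel}, which even yields equality $\gp(G)=s$; this difference is cosmetic. The substantive difference is the bound $\Z(G)\le s-1$: the paper proves it by induction on $n(G)+k$, peeling off a pendent block $B$ with $V(B)=\{v_0,v_1,\ldots,v_t\}$ and cut vertex $v_0$, and augmenting a minimum zero forcing set $S'$ of $G'=G-(V(B)\setminus\{v_0\})$ by $\{v_1,\ldots,v_{t-1}\}$, whereas you exhibit a single explicit forcing set $\mathcal{S}(G)\setminus\{z\}$ together with a deepest-first forcing chronology along the block-cut tree. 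Your route is a bit longer to write down, but it buys genuine robustness. In the paper's inductive step one must re-run the forcing of $G'$ inside $G$, where any force performed by $v_0$ is blocked while $v_t$ is still white (this is repairable: once $v_0$ turns black, the black simplicial vertex $v_1$ has $v_t$ as its only white neighbour and forces it, after which $v_0$ may act); worse, the paper's claim in the case $t=1$ that ``$S'$ is also a zero forcing set of $G$'' is false as stated: taking $G'=P_3$ with $S'$ an end vertex and $B$ a pendent edge attached at the centre gives $G=K_{1,3}$, where $S'$ forces only two of the four vertices (the bound $\Z(G)\le n(G)-k-1=2$ still holds, just not via that step). Your deepest-first ordering avoids both issues structurally, since every vertex called upon to force has all of its neighbours in deeper blocks already black and all simplicial vertices are black from the start; in that sense your construction is a correct, self-contained replacement for the paper's induction, and your verification of the ordering claim is exactly the point that needed care.
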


\begin{proof}
Let $S$ be the set of simplicial vertices of a block graph $G$. Then it was proved in~\cite[Theorem 3.6]{Manuel} that $S$ is a general position set of $G$ and that $\gp(G) = |S|$. To prove the theorem it thus suffices to show that $\Z(G)\leq |S|-1$.

Suppose that $G$ has $k$ cut vertices and set $n = n(G)$. Then $|S|=n-k$. We prove that $\Z(G)\leq n-k-1$ by induction on $n+k$. Clearly, $n+k\geq n$. if $n+k=n$, that is, if $k=0$, then $G\cong K_{n}$ and $\Z(G)=n-1$, as desired. Suppose next that $k\geq 1$ and let $B$ be a pendent block of $G$ sharing the unique cut vertex $v_{0}$ with a smaller block graph $G'$ of order $n'$ and with $k'$ cut vertices. Note that $n' = n - |B| + 1$. Since $n'+k' < n + k$, the induction hypothesis implies $\Z(G')\leq n'-k'-1$.

Let $S'$ be a zero forcing set of $G'$ with $|S'| = \Z(G')$. Let $V(B)=\{v_{0}, v_{1}, \ldots, v_{t}\}$. Then $t = |B|-1$. If $t=1$, then $S'$ is also a zero forcing set of $G$. Assume next that $t\geq 2$. Let $S = S' \cup \{v_{1}, \ldots, v_{t-1}\}$. Note that each of the vertices $v_{1}, \ldots, v_{t-1}$ is simplicial. Moreover, all vertices of $G'$ are forced to be black under $S'$. Thus, $v_{t}$ is the unique white neighbor of $v_{0}$ in $G$. Then $v_{t}$ is forced to be black by $v_{0}$  under $S$ in $G$. So, $S$ is a zero forcing set of $G$. In conclusion, if $t=1$, then
$$\Z(G)\leq |S|=|S'|\leq n'-k'-1=(n-1)-k'-1\leq n-k-1\,,$$
and if $t\geq 2$, then since $k'+1\geq k$,
$$\Z(G)\leq |S|=|S'|+t-1\leq n'-k'-1+t-1=n-k'-2\leq n-k-1\,,$$ and we are done.
\end{proof}

We next demonstrate that the zero forcing number and the general position number are not comparable on quasi-trees. For this sake consider the quasi-trees $H_{3}$ and $H_{4}$ from Fig.~\ref{fig:quasi-trees}. Just as $H_1$ in Fig.~\ref{fig:bicyclic}, also $H_3$ and $H_4$  each represents an infinite family of graphs, but we consider their parameters as fixed and denote the representatives simply by $H_3$ and $H_4$.

\begin{figure}[ht!]
\begin{center}
\includegraphics*[width=11cm]{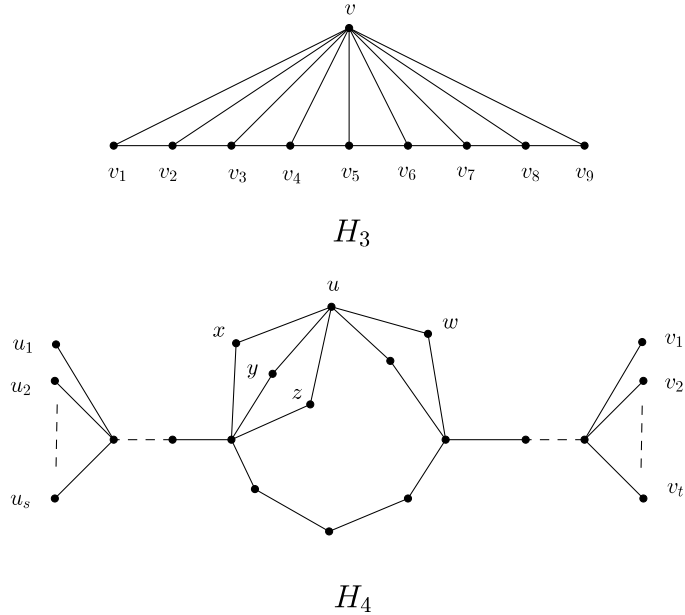}
\end{center}
\vspace{-8mm}
\caption{Quasi-trees $H_{3}$ and $H_{4}$}
\label{fig:quasi-trees}
\end{figure}

It is straightforward to verify that $\{v_{1}, v\}$ is a minimum zero forcing set of $H_{3}$, and that $\{v_{1}, v_{2}, v_{4}, v_{5}, v_{7}, v_{8}\}$ is a gp-set of $H_{3}$. Thus, $\Z(H_{3})=2<6 = \gp(H_{3})$. On the other hand, it can be seen that $\{u_{1}, \ldots, u_{s}, x, y, z, w, v_{1}, \ldots, v_{t-1}\}$ is a minimum zero forcing set of $H_{4}$, and that $\{u_{1}, \ldots, u_{s}, v_{1}, \ldots, v_{t}\}$ is a gp-set of $H_{4}$. Thus, $\Z(H_{4})=s+t+3>s+t = \gp(H_{4})$. So the zero forcing number and the general position number are not comparable on quasi-trees. But we do have the following result.

\begin{theorem}
If $G$ is a quasi-tree in which one of the following conditions hold:
\begin{enumerate}
\item[(i)] $G$ contains no pendent vertices,
\item[(ii)] $G$ contains a quasi-vertex $x$ such that $x$ does not have degree $2$ neighbors,
\end{enumerate}
then $\gp(G)\geq \Z(G)$.
\end{theorem}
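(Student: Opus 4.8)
The plan is to bound $\Z(G)$ from above and $\gp(G)$ from below by the same quantity $\ell(T)$, where $x$ is the chosen quasi-vertex and $T=G-x$ is the underlying tree. First I would record the general inequality $\Z(G)\le \Z(G-x)+1$, valid for every graph and every vertex: if $S$ is a minimum zero forcing set of $G-x$, then $S\cup\{x\}$ is a zero forcing set of $G$, since a black $x$ is never a white neighbour and hence every force carried out in $G-x$ can be replicated verbatim in $G$. Applying this to the quasi-vertex $x$ and combining it with the tree bound $\Z(T)\le \ell(T)-1$ underlying \eqref{eq:starting-point} gives $\Z(G)\le \Z(T)+1\le \ell(T)$. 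I would also clear away the degenerate situations at the outset: if $\deg_G(x)\le 1$ then $G$ is itself a tree and \eqref{eq:starting-point} already yields the conclusion, so from here on $\deg_G(x)\ge 2$ and $T$ has at least two vertices, hence at least two leaves. This part is routine and I expect no difficulty.

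It then remains to exhibit a general position set of size at least $\ell(T)$, and this is where the two hypotheses are used differently. Under (i), $G$ has no pendent vertices, so every leaf of $T$ must be adjacent to $x$; consequently any two distinct leaves of $T$ are joined only through $x$ and lie at distance exactly $2$ in $G$ (the case $T=K_2$ has only two leaves and is handled trivially). I would then argue that $\mathcal{L}(T)$ is a general position set of $G$: if some leaf $v$ lay on a shortest path between two other leaves $u,w$, then $d_G(u,v)+d_G(v,w)=d_G(u,w)\le 2$ would force $v$ to be adjacent to both $u$ and $w$, which is impossible because the degree-$2$ vertex $v$ has $x$ as one neighbour and its unique $T$-neighbour as the other. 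Hence $\gp(G)\ge |\mathcal{L}(T)|=\ell(T)\ge \Z(G)$.

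Under (ii), I would first translate the degree hypothesis. For $n(G)\ge 3$ no neighbour of $x$ can have degree $1$ (it would be isolated in the connected tree $T$), so the hypothesis that $x$ has no degree-$2$ neighbour upgrades to: every neighbour of $x$ has degree at least $3$ in $G$, and is therefore a non-leaf of $T$. This shows that no leaf of $T$ is adjacent to $x$, whence the pendent vertices of $G$ and of $T$ coincide, $\mathcal{L}(G)=\mathcal{L}(T)$. Since the set of all pendent vertices of any graph is a general position set (a leaf cannot be an interior vertex of a geodesic), this gives $\gp(G)\ge \ell(G)=\ell(T)\ge \Z(G)$.

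I expect the main obstacle to be the verification in case (i) that $\mathcal{L}(T)$ persists as a general position set once $x$ is adjoined: the new edges at $x$ shorten distances and could \emph{a priori} create forbidden betweenness among the leaves, and it is precisely the fact that each such leaf has degree $2$ in $G$ (with $x$ as one of its two neighbours) that rules this out. By contrast the case (ii) argument is largely mechanical, the only care needed being the identity $\mathcal{L}(G)=\mathcal{L}(T)$ together with the prior exclusion of $K_2$ and of trees.
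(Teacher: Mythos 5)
Your proof is correct and takes essentially the same route as the paper: delete the quasi-vertex $x$, use $\Z(G)\le \Z(G-x)+1$ (which the paper cites from Edholm et al.\ and you prove directly), and then exhibit the leaves of $G-x$ (case (i)) resp.\ the leaves of $G=\,$leaves of $G-x$ (case (ii)) as a general position set of size at least $\ell(G-x)\ge \Z(G)$. Your explicit treatment of the degenerate cases ($\deg_G(x)\le 1$ and $T=K_2$, where the paper's ``pairwise distance $2$'' claim would need the same caveat) is a minor tightening, not a different approach.
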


\begin{proof}
If $G$ is itself a tree, then the result holds by~\eqref{eq:starting-point}. Hence assume in the rest that $G$ has at least one cycle.

(i) Suppose that $G$ contains no pendent vertices. Let $x$ be an arbitrary quasi-vertex of $G$. Then $G-x$ is a tree and by the assumption we see that $x$ must be adjacent to all the leaves of $G-x$, that is, $\mathcal{L}(G-x)\subseteq N_{G}(x)$. We claim that $\mathcal{L}(G-x)$ is a general position set of $G$. Indeed, if $u$ and $v$ are arbitrary leaves from $\mathcal{L}(G-x)$, then $d_G(u,v) = 2$ as each of them is adjacent to $x$. So $\mathcal{L}(G-x)$ is a set of vertices that are pairwise at distance $2$ and hence forms a general position set. We can now estimate as follows:
\begin{align*}
Z(G)& \leq \Z(G-x) + 1 \\
    & \leq (\gp(G-x) - 1) + 1 = \gp(G-x) \\
    & = \ell(G-x) \\
    & \leq \gp(G)\,.
\end{align*}
The first inequality follows by~\cite[Theorem 2.3]{Edholm}, the second inequality by~\eqref{eq:starting-point}, the equality follows because $G-x$ is a tree, while the last inequality follows by the argument above.

(ii) Suppose that $G$ contains a quasi-vertex $x$ such that no neighbor of $x$ is of degree $2$. This means that $x$ is adjacent to no leaf of $G-x$ which in turn implies that $\mathcal{L}(G-x) = \mathcal{L}(G)$. Since the set of leaves is a general position set in any graph, this means that $\gp(G-x)\leq \gp(G)$. Then, similarly as in (i), we conclude that $\Z(G) \leq \Z(G-x) + 1\leq\gp(G-x) = \ell(G-x) \leq \gp(G)$.
\end{proof}

We conclude the section with the following extension of~\eqref{eq:starting-point}.

\begin{proposition}
If $F$ is a forest with $k\ge 1$ non-trivial components, then $\gp(F)\geq \Z(F)+k$.
\end{proposition}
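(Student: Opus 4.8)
The plan is to reduce the statement to the single-tree inequality~\eqref{eq:starting-point} by exploiting that both $\Z$ and $\gp$ are additive over the connected components of a graph. First I would record these two additivity facts. For the zero forcing number, observe that the color-change rule only ever inspects the neighbours of a vertex, and these all lie in its own component; hence a set is a zero forcing set of $F$ if and only if its trace on each component is a zero forcing set of that component. Writing $F_1,\dots,F_r$ for the components of $F$, this gives $\Z(F)=\sum_{i} \Z(F_i)$. For the general position number, the key observation is that in a disconnected graph no shortest path can meet two different components, so any three vertices lying on a common shortest path must belong to a single component; consequently a set $R$ is a general position set of $F$ exactly when each $R\cap V(F_i)$ is a general position set of $F_i$, and therefore $\gp(F)=\sum_{i}\gp(F_i)$.

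With these in hand, I would split the components of $F$ into the $k$ non-trivial trees $T_1,\dots,T_k$ (each on at least two vertices) together with the remaining trivial components, that is, isolated vertices. To each $T_i$ I apply~\eqref{eq:starting-point} to obtain $\gp(T_i)\ge \Z(T_i)+1$, so that every non-trivial component contributes a surplus of at least $1$ to the difference $\gp-\Z$. Each isolated vertex satisfies $\gp(K_1)=\Z(K_1)=1$ and hence contributes $0$ to this difference. Summing the per-component inequalities then yields
\[
\gp(F)=\sum_{i}\gp(F_i)\ge \sum_{i}\Z(F_i)+k=\Z(F)+k,
\]
which is the desired bound.

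I do not anticipate a genuine obstacle here, as the entire content of the proof is the two additivity statements; of these, only the $\gp$-additivity warrants a word of care, since one must note that inter-component distances are infinite and so give rise to no shortest paths, forcing every witnessing triple into a single component. Everything else is bookkeeping: the inequality is merely~\eqref{eq:starting-point} applied once per non-trivial component and then added up, with the trivial components neither helping nor hurting the count.
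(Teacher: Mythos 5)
Your proposal is correct and takes essentially the same route as the paper: decompose $F$ into its components, apply~\eqref{eq:starting-point} once per non-trivial component, and observe that isolated vertices contribute equally (namely $1$) to both $\Z(F)$ and $\gp(F)$. The only cosmetic difference is that you state full component-wise additivity of both invariants as equalities, whereas the paper argues via explicit unions of minimum forcing sets and gp-sets of the components together with the isolated vertices.
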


\begin{proof}
Let $T$ be a forest, let $T_{1}, \ldots, T_{k}$ be its non-trivial components, and let $x_{1}, \ldots, x_{s}$ be its isolated vertices, where $s\ge 0$. By~\eqref{eq:starting-point} we have $\gp(T_{i})\geq \Z(T_{i})+1$ for each $i \in [k]$. Let $S_{i}$ be a minimum forcing set of $T_{i}$ for
each $i\in [k]$. Then $S_{1}\cup \cdots \cup S_{k}\cup \{x_{1},\ldots,x_{s}\}$ is a minimum forcing set of $F$ and hence $\Z(F)=|S_{1}|+\cdots+|S_{k}|+s=\Z(T_{1})+\cdots+\Z(T_{k})+s\leq (\gp(T_{1})-1)+\cdots+(\gp(T_{k})-1)+s =\gp(T_{1})+\cdots+\gp(T_{k})-k+s$. For  each $i\in [k]$, let $R_i$ be a $\gp$-set of $T_{i}$. Then $R_{1}\cup \cdots \cup R_{k}\cup \{x_{1},\ldots,x_{s}\}$ is a gp-set of $F$. So, $\gp(T_{1})+\cdots+\gp(T_{k})+s=|R_{1}\cup \cdots \cup R_{k}\cup \{x_{1}, \ldots, x_{s}\}|\leq \gp(F)$. We conclude that $\Z(F)\leq \gp(F)-k$.
\end{proof}

%%%%%%%%%%%%%%%%%%%%%%%%%%%%%%%%%%%%%
\section{Concluding remarks}
\label{sec:conclude}
%%%%%%%%%%%%%%%%%%%%%%%%%%%%%%%%%%%%%

We have proved that the zero forcing number (plus maybe 1) is a lower bound for the general position number for trees, unicyclic graphs, block graphs, and special quasi-trees. We have also demonstrated that this does not hold for bicyclic graphs and for quasi-trees, hence we pose the following two problems.

\begin{problem}
Determine the bicyclic graphs $G$ such that $\gp(G)\geq \Z(G)$.
\end{problem}

\begin{problem}
Determine the quasi-trees $G$ such that $\gp(G)\geq \Z(G)$.
\end{problem}

Note that the graphs $H_1$ from Fig.~\ref{fig:bicyclic} and the graphs $H_4$ from Fig.~\ref{fig:quasi-trees} are bipartite. As we have seen, $\Z(H_1) > \gp(H_1)$ and $\Z(H_4) > \gp(H_4)$. On the other hand, \eqref{eq:starting-point} asserts that $\gp(T) > \Z(T)$ holds for trees $T$. Hence the following problem is also relevant.

\begin{problem}
Determine the bipartite graphs $G$ such that $\gp(G)\geq \Z(G)$.
\end{problem}

\section*{Acknowledgements}

Hongbo Hua was supported by National Natural Science Foundation of China under Grant No. 11971011. Sandi Klav\v{z}ar acknowledges the financial support from the Slovenian Research Agency (research core funding P1-0297, and projects N1-0095, J1-1693, J1-2452).

\section*{Competing Interests}

The authors have no relevant financial or non-financial interests to disclose.

\section*{Data Availability Statements}

All data generated or analysed during this study are included in this published article (and its supplementary information files).


\begin{thebibliography}{99}
{\small
\bibitem{SGWGroup}
AIM Minimum Rank - Special Graphs Work Group (F. Barioli, W. Barrett, S. Butler, S.M. Cioab\v{a}, D. Cvetkovi\'{c}, S.M. Fallat, C. Godsil, W. Haemers, L. Hogben, R. Mikkelson, S.
Narayan, O. Pryporova, I. Sciriha, W. So, D. Stevanovi\'{c}, H. van der Holst, K. Vander Meulen,
A.W. Wehe). Zero forcing sets and the minimum rank of graphs, Linear Algebra Appl.\ 428 (2008) 1628--1648.

\bibitem{Aazami}
A. Aazami, Hardness results and approximation algorithms for some problems on graphs (Ph.D. thesis), University of Waterloo, 2008.

\bibitem{Anand}
B.S. Anand, S.V. Ullas Chandran, M. Changat, S. Klav\v{z}ar, E.J. Thomas, Characterization of general position sets and its applications to cographs and bipartite graphs, Appl.\ Math.\ Comput.\ 359 (2019) 84--89.

\bibitem{Barioli}
F. Barioli, S. Fallet, L. Hogben, On the difference between the maximum multiplicity and path cover number for tree-like graphs,
Linear Algebra Appl.\ 409 (2005) 13--31.

\bibitem{Burgarth}
D. Burgarth, V. Giovannetti, Full control by locally induced relaxation, Phys.\ Rev.\ Lett.\ 99 (2007) 100501.

\bibitem{Dreyer}
P.A. Dreyer, F.S. Roberts, Irreversible $k$-threshold processes: Graph-theoretic threshold models of the spread of disease and of opinion, Discrete
Appl.\ Math.\ 157 (2009) 1615--1627.

\bibitem{Edholm}
C.J. Edholm, L. Hogben, M. Hyunh, J. LaGrange,  D.D. Row, Vertex and edge spread of zero forcing number, maximum nullity, and minimum rank of a graph, Linear Algebra Appl.\ 436 (2012) 4352--4372.

\bibitem{Eroh1}
L. Eroh, C.X. Kang, E. Yi,
Metric dimension and zero forcing number of two families of line graphs,
Math. Bohem. 139 (2014) 467--483.

\bibitem{Eroh2}
L. Eroh, C.X. Kang,  E. Yi, A comparison between the metric dimension and
zero forcing number of trees and unicyclic graphs, Acta Math.\ Sin.\ (Engl.\ Ser.)   33 (2017) 731--747.

\bibitem{Ferrero}
D. Ferrero, T. Kalinowski, S. Stephen, Zero forcing in iterated line digraphs, Discrete Appl.\ Math.\ 255 (2019) 198--208.

\bibitem{Ghorbani}
M. Ghorbani, S. Klav\v{z}ar, H.R. Maimani, M. Momeni, F. Rahimi-Mahid, G. Rus, The general position problem on Kneser graphs and on some graph
operations, Discuss.\ Math.\ Graph Theory 41 (2021) 1199--1213.

\bibitem{Hogben}
L. Hogben, Minimum rank problems, Linear Algebra Appl.\ 432 (2010) 1961--1974.

\bibitem{Javaid}
I. Javaid, I. Irshad, M. Batool, Z. Raza, On the zero forcing number of corona and lexicographic product of graphs, arXiv:1607.04071 [math.CO] (14 Jul 2016).

\bibitem{Kang1}
C.X. Kang, E. Yi, On zero forcing number of functigraphs, arXiv:1204.2238v2 [math.CO] (6 May 2012).

\bibitem{Kang2}
C.X. Kang, E. Yi, A comparison between the zero forcing number and the strong metric dimension of graphs, Lecture Notes Comp.\ Sci.\ 8881 (2014)  356--365.

\bibitem{Klavzar-Kuziak}
S. Klav\v{z}ar, D. Kuziak, I. Peterin, I.G. Yero, 
A Steiner general position problem in graph theory,
Comput.\ Appl.\ Math.\ 40 (2021) 223. 

\bibitem{Klavzar2}
S. Klav\v{z}ar, G. Rus, The general position number of integer lattices, Appl.\ Math.\ Comput.\ 390 (2021) 125664.

\bibitem{Klavzar1}
S. Klav\v{z}ar, I.G. Yero, The general position problem and resolving graphs, Open Math.\ 17 (2019) 1126--1135.

\bibitem{korner-1995}
  J.~K\"orner,
  On the extremal combinatorics of the Hamming space,
  J.\ Combin.\ Theory Ser A 71 (1995) 112--126.

\bibitem{Lu}
L. Lu, B. Wu, Z. Tang,
Proof of a conjecture on the zero forcing number of a graph, Discrete Appl.\ Math.\ 213 (2016) 233--237.

\bibitem{Manuel}
P. Manuel, S. Klav\v{z}ar, A general position problem in graph theory, Bull.\ Aust.\ Math.\ Soc.\ 98 (2018) 177--187.

\bibitem{neethu-2021}
  P.K.~Neethu, S.V.~Ullas Chandran, M.~Changat, S.~Klav\v{z}ar,
  On the general position number of complementary prisms,
  Fund.\ Inform.\ 178 (2021) 267--281.

\bibitem{Oboudi}
M.R. Oboudi, On the zero forcing number of trees, Iran J.\ Sci.\ Technol.\ Trans.\ Sci.\ 45 (2021) 1065--1070.

\bibitem{Patkos}
B. Patk\'{o}s, On the general position problem on Kneser graphs, Ars Math.\ Contemp.\ 18 (2020) 273--280.

\bibitem{Row}
D.D. Row, A technique for computing the zero forcing number of a graph
with a cut-vertex, Linear Algebra Appl.\ 436 (2012) 4423--4432.

\bibitem{thomas-2020}
  E.J.~Thomas, S.V.~Ullas Chandran,
  Characterization of classes of graphs with large general position number,
  AKCE Int.\ J.\ Graphs Comb.\ 17 (2020) 935--939.

\bibitem{Tian}
J. Tian, K. Xu, The general position number of Cartesian products involving
a factor with small diameter, Appl.\ Math.\ Comput.\ 403 (2021) 126206.

\bibitem{tian-2021}
  J.~Tian, K.~Xu, S.~Klav\v{z}ar,
  The general position number of Cartesian product of two trees,
  Bull.\ Aust.\ Math.\ Soc.\  104 (2021) 1--10.

\bibitem{UllasChandran}
S.V. Ullas Chandran, G. Jaya  Parthasarathy, The geodesic irredundant sets in graphs, Int.\ J.\ Math.\ Combin.\ 4 (2016) 135--143.

\bibitem{Yang}
B. Yang, Fast-mixed searching and related problems on graphs, Theoret.\ Comput.\ Sci.\ 507 (2013) 100--113.
}
\end{thebibliography}
\end{document}